\newtheorem{theorem}{Theorem}[section]
\newtheorem{proposition}[theorem]{Proposition}
\newtheorem{lemma}[theorem]{Lemma}
\newtheorem{corollary}[theorem]{Corollary}
\theoremstyle{definition}
\newtheorem{remark}[theorem]{Remark}
\newtheorem{definition}[theorem]{Definition}
\newcommand{\C}[1]{\mathcal{#1}}
\newcommand{\B}[1]{\mathbb{#1}}
\newcommand{\rmod}[1]{\text{{\bf Mod}-}{#1}}
\newcommand{\xla}[1]{\xleftarrow{#1}}
\newcommand{\tl}{\triangleleft}
\newcommand{\tr}{\triangleright}
\newcommand{\CB}{\text{CB}}
\newcommand{\Span}{\text{\rm Span}}
\newcommand{\Tor}{\text{\rm Tor}}
\newcommand{\Ind}{\text{\rm Ind}}
\newcommand{\Res}{\text{\rm Res}}
\newcommand{\Ext}{\text{\rm Ext}}
\newcommand{\Hom}{\text{\rm Hom}}
\newcommand{\CoInd}{\text{\rm CoInd}}
\newcommand{\Simp}{{\Delta}}
\newcommand{\Diff}{{\Omega}}
\numberwithin{equation}{section} 
\title{A Model Categoric Equivalence for Crossed Simplicial Modules}
\author{Haydar Can Kaya}
\email{kayah17@itu.edu.tr}
\author{Atabey Kaygun}
\email{kaygun@itu.edu.tr}
\address{Istanbul Technical University, Turkey}
\begin{document}

\begin{abstract}
 We construct a model categorical equivalence between the category of simplicial vector spaces and the category of representations of a crossed simplicial group $\Delta G$ when each $G_n$ is finite and the characteristic of the ground field is 0.
\end{abstract}

\maketitle

\section*{Introduction}

Simplicial sets have a long history going back to Eilenberg and
Zilber~\cite{eilenberg1950semi}. However, the first extension of the simplicial category $\Simp$ to
include group actions compatible with the simplicial structure maps is Connes' cyclic category
$\Simp C $~\cite{connes1983cohomologie, quillen1984cyclic}.  Connes considered the action of cyclic
groups on the simplicial category to define cyclic homology in an invariant way. In this setup,
cyclic groups can be replaced with other interesting groups, such as the additive group of integers
$\B{Z}$~to construct the paracyclic category
$\Simp\B{Z}$~\cite{GetzlerJones:CyclicHomologyOfCrossedProductAlgebras}, or with symmetric groups to
construct the symmetric category~$\Simp S$~\cite{Ault:SymmetricHomology}, or dihedral groups, hyperoctahedral groups, or even braid
groups~\cite{Aboughazi:CrossedSimplicialGroups, Loday:CyclicHomology} to construct similar structures. Such structures are called
\emph{crossed simplicial groups} and are first introduced by
Fiedorowicz-Loday~\cite{fiedorowicz1991crossed} and Krasauskas~\cite{krasauskas1987skew},
independently.

It was probably Kan~\cite{kan1955abstract,kan1956abstract} who first observed that there is an intimate connection between simplicial sets and
homotopy theory of topological spaces even though the ideas
behind and the methods supporting simplicial gadgets are largely algebro-combinatorial in
nature. This point of view was later effectively encoded by Quillen in the notion of (closed) model
categories~\cite{quillen1967homotopical}. As such, the proper context for investigating a homotopy
theory of representations of crossed simplicial groups must go through endowing their category with
a suitable model structure such as~\cite{dwyer1987three}.

Our aim is to show a specific model categorical equivalence between the category of
simplicial vector spaces $\rmod{\Simp}$ and the category of representations of a crossed simplicial
group $\rmod{\Simp G}$ when $G$ is locally finite.  The prototypical example of such equivalences is
the classical Dold-Kan equivalence~\cite{DoldPuppe:Correspondence}, indeed an isomorphism, between the category of
simplicial abelian groups and differential graded abelian groups, or the Dwyer-Kan equivalence
between representations of the cyclic crossed simplicial group and duplicial
modules~\cite{dwyer1985normalizing}. The earliest example we know of a model-categorical equivalence of the desired type is the (monoidal) model-categorical (weak) equivalence between the category of differential graded modules and the category of simplicial modules by Schwede and Shipley~\cite{SchwedeShipley00,SchwedeShipley03}. The contemporary literature is now replete with such equivalences~\cite{Hinich16, chorny2023variant, richter-symmpropDold,
  shoikhet2011bialgebra, sore-coalgebraDold, SoreHermann17, Lurie:HigherAlgebra,
  dyckerhoff-categorified, peroux22, walde-homotopycoherent, peroux2023monoidal,
  truong2023operadic}.

We start by showing that the category of differential graded vector spaces $\rmod{\Diff}$ and the category of simplicial vector spaces
$\rmod{\Simp}$ are equivalent after a suitable localization in Theorem~\ref{thm:adjoint-pair}. We know that there is a
cofibrantly generated model category structure on
$\rmod{\Simp}$~\cite{jardine_2003,SchwedeShipley03}. In Section~\ref{sect:transfer1}, we show in that this model category structure is lifted from a
cofibrantly generated model structure on $\rmod{\Diff}$ using the functors implementing the equivalence given in Theorem~\ref{thm:adjoint-pair}. Thus Theorem~\ref{thm:adjoint-pair}
becomes a model categorical equivalence.  Our first main result is
Theorem~\ref{thm:HomotopyEquivalence} where we prove that when $G$ is locally finite, the categories $\rmod{\Simp}$ and
$\rmod{\Simp G}$ are equivalent after a suitable homotopy localization in the same vein as
Theorem~\ref{thm:adjoint-pair}. Then in Theorem~\ref{thm:QuillenEquivalence} we show that we can
lift the model structure we had on $\rmod{\Simp}$ to a model structure on the category of crossed
simplicial vector spaces $\rmod{\Simp G}$ using~\cite{hess2015, hess2017necessary} to give us the
correct homotopy equivalences in $\rmod{\Simp G}$. Thus the equivalence we show in
Theorem~\ref{thm:HomotopyEquivalence} is indeed a model theoretic equivalence between
$\rmod{\Simp G}$ and $\rmod{\Simp}$.

\subsection*{Notation and conventions}

We fix a ground field $\Bbbk$ of characteristic 0.  For a $\Bbbk$-vector space $V$, we will use
$V^\vee$ to denote its $\Bbbk$-vector space dual. We consider $\B{N}$ as a discrete category on the
set of objects $\B{N}$ with nothing but the identity morphisms on objects. For a small category
$\C{C}$ we use $\C{C}_{\Bbbk}$ for the algebra generated by the arrows of $\C{C}$ (called \emph{the
  categorical algebra of $\C{C}$}), or just $\C{C}$, by abuse of notation. However, for the
categorical algebra of the discrete category $\B{N}$, we use $\Bbbk_\bullet$. All modules are
assumed to be right modules unless otherwise stated. All $\Bbbk_\bullet$-modules are assumed to be
locally finite in the sense that $X_\bullet = \bigoplus_{n\in\B{N}} X_n$ where $X_n$ is defined as
$X_\bullet\cdot id_n$.

\subsection*{Acknowledgements}

We would like to thank Mehmet Akif Erdal for reading an earlier version of this manuscript and giving us valuable feedback.

\section{Preliminaries}

\subsection{Simplicial and cosimplicial modules}

We consider the skeletal category of finite well-ordered sets $\Simp$. We consider the generators
$[n+1]\xla{\partial^n_i}[n]$ and $[n]\xla{\sigma^n_i}[n+1]$ where
$[n]=\{0<\cdots<n\}$ and
\[ \partial^n_i(j) =
  \begin{cases}
    j   & \text{ if } j< i\\
    j+1 & \text{ if } j\geq i
  \end{cases}
  \qquad
  \sigma^n_i(j) =
  \begin{cases}
    j   & \text{ if } j\leq i\\
    j-1 & \text{ if } j> i
  \end{cases}
\]
There are non-trivial relations among these generators
\[ \partial^{n+1}_i\partial^n_j = \partial^{n+1}_{j+1}\partial^n_i, \qquad
  \sigma^n_j\sigma^{n+1}_i = \sigma^n_i\sigma^{n+1}_{j+1}, \qquad
  \sigma^n_j\partial^n_i =
  \begin{cases}
    \partial^{n-1}_i\sigma^{n-1}_{j-1} & \text{ if } i<j\\
    id & \text{ if } i=j, j+1\\
    \partial^{n-1}_{i-1}\sigma^{n-1}_j & \text{ if } i>j+1
  \end{cases}
\]
whenever $i\leq j$. See, for example~\cite{May:SimplicialObjects}.  With this notation, right and
left $\Simp $-modules respectively are simplicial and cosimplicial $\Bbbk$-modules.

Note that, the interaction relations between $\sigma^n_j$'s and $\partial^n_i$'s can be rewritten
as
\[ \partial^{n-1}_i \sigma^{n-1}_j =
  \begin{cases}
    \sigma^n_{j+1}\partial^n_i & \text{ if } i\leq j\\
    \sigma^n_j \partial^n_{i+1} & \text{ if } i>j
  \end{cases}
\]
Now, we see that any morphism in $\Simp$ of the form $[m]\xla{\varphi}[n]$ can be written uniquely
as a monomial of the form
\begin{equation}
  \label{eq:monomials1}
   \sigma^m_{\ell_m}\cdots \sigma^r_{\ell_r} \partial^r_{i_r}\cdots \partial^n_{i_n}
\end{equation}
where $i_r>\cdots > i_n$ and $\ell_r >\cdots > \ell_m$ and $i_r\neq \ell_r, \ell_r+1$.

\subsection{Crossed simplicial modules} \cite[1.1. Definition]{fiedorowicz1991crossed} We are interested in any sequence of groups $\{G_n\}_{n\in\B{N}}$ that allows us to define a new category $\Simp G$ that extends the simplicial category $\Simp$ so that 
\begin{enumerate}[(a)]
    \item the category $\Simp G$ and $\Simp$ share the same set of objects,
    \item the automorphism group of each object $[n]$ in $\Simp G$ is $G_n^{op}$, and
    \item any morphism in $\Simp G$ can be uniquely written in the form of a composite $\phi \cdot g$ of an element $\phi \in \Hom_{\Simp}([m],[n])$ and $g \in G_m^{op}$.
\end{enumerate}
In other words, in such an extension we require that morphisms to be represented by monomials of the form
\begin{equation}
  \label{eq:monomials3}
  \sigma^m_{\ell_m}\cdots \sigma^r_{\ell_r} \partial^r_{i_r}\cdots \partial^n_{i_n} g
\end{equation}
where $g\in G_n$, $i_r>\cdots > i_n$ and $\ell_r >\cdots > \ell_m$ and $i_r\neq \ell_r,
\ell_r+1$. The extension category $\Simp G$ is called \emph{a crossed simplicial group}. The
notable examples include Connes' cyclic category $\Simp C$ constructed out of cyclic groups
$\{\B{Z}/(n+1)\B{Z}\}_{n\in\B{Z}}$, the symmetric category $\Simp S$ constructed out of symmetric
groups $\{S_{n+1}\}_{n\in\B{N}}$, and the paracyclic category $\Simp \B{Z}$ constructed out of
infinite cyclic group $\{\B{Z}\}_{n\in\B{N}}$.

\subsection{Differential graded modules}

Let us define
\begin{equation}
    d_{i,n} = \sum_{j=i}^n (-1)^j \partial^n_j
\end{equation}
Notice that $d_{i,n+1}d_{i,n} = 0$ for every $n$ and $0\leq i\leq n+1$.

Now, define $\Diff$ as the unital subalgebra of $\Simp$ generated by $1_n$ and $d_{0,n}$ for every
$n$. With this notation, $\Diff$-modules are differential graded $\Bbbk$-modules where
differentials decrease the degree by 1 for right $\Diff$-modules, increase the degree by 1 for left
$\Diff$-modules.

\subsection{A change of basis}

\begin{lemma}
  The categorical algebra $\Simp$ has a basis that consists of monomials of the form
  \begin{equation}
    \label{eq:monomials2}
    \sigma^m_{\ell_n}\cdots \sigma^r_{\ell_r}d_{j_r,r}\cdots d_{j_n,n}
  \end{equation}
  again with $j_r>\cdots > j_n$ and $\ell_r >\cdots > \ell_m$.
\end{lemma}

\begin{proof}
  We show that one can replace monomials of the form
  $\sigma^m_{\ell_m}\cdots \sigma^r_{\ell_r}\partial^r_{i_r}\cdots \partial^n_{i_r}$ with monomials
  of the form $\sigma^m_{\ell_m}\cdots \sigma^r_{\ell_r}d_{j_r,r}\cdots d_{j_n,n}$ where
  $j_r>\cdots>j_n$, and $\ell_r >\cdots > \ell_m$ and $j_r\neq i_r,i_r+1$. Let us prove this by
  induction on the length of the monomials in $\partial$'s. The statement is clear for monomials of
  length 1. So, assume we have the statement for all monomials of $\partial$'s of length $N$. Take
  a monomial of the form
  $\sigma^m_{\ell_m}\cdots \sigma^r_{\ell_r}\partial^r_{i_r}\cdots \partial^n_{i_n}$ where $r-n=N$
  and $i_r>\cdots>i_n$ and $\ell_m >\cdots > \ell_r$. Since we can replace the initial part of the
  monomials in $\partial$'s with the monomials of the correct type, we get a sum of the form
  \begin{align*}
    \sigma^m_{\ell_m}\cdots \sigma^r_{\ell_r}\partial^r_{i_r}\cdots \partial^n_{i_n}
    = & \sum _{j_r>\cdots>j_{n+1}>i_n} \lambda_{j_m,\ldots,j_{n+1}}\sigma^m_{\ell_m}\cdots \sigma^r_{\ell_r} d_{j_r,r}\cdots d_{j_{n+1},n+1}\partial^n_{i_n}\\
    = & \sum _{j_r>\cdots>j_{n+1}>i_n} (-1)^{i_n}\lambda_{j_m,\ldots,j_{n+1}}\sigma^m_{\ell_m}\cdots \sigma^r_{\ell_r} d_{j_r,r}\cdots d_{j_{n+1},n+1}(d_{i_n,n}-d_{i_n+1,n})
  \end{align*}
  The only part of the sum which does not conform to our statement comes from monomials of the form
  \begin{equation*}
    \sigma^m_{\ell_m}\cdots \sigma^r_{\ell_r}d_{j_r,r}\cdots d_{j_{n+1},n+1} d_{i_n+1,n}
  \end{equation*}
  where we still have $j_r>\cdots>j_{n+1}>i_n$.  As long as $j_{n+1}>i_n+1$ the sum still conforms
  to the statement. The only case we must resolve is when $j_{n+1} = i_n+1$ in which case we use
  $d_{i,a+1}d_{i,a} = 0$. This proves that the set of monomials given in
  Equation~\eqref{eq:monomials2} is a spanning set. However, the set of monomials given in
  Equation~\eqref{eq:monomials1} and the set of monomials given in Equation~\eqref{eq:monomials2}
  have the same finite size since they are indexed by the same of set of sequences. Thus the set of
  monomials given in Equation~\eqref{eq:monomials2} forms a basis.
\end{proof}

\begin{proposition}\label{prop:free}
  $\Simp$ is a free right $\Diff$-module.
\end{proposition}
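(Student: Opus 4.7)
The plan is to refine the $\Bbbk$-basis of $\Simp$ from the previous lemma into an explicit free basis for the right $\Diff$-module structure. The first step is to pin down $\Diff$ itself: as a $\Bbbk$-algebra it has basis $\{1_k, d_{0,k}\}_{k\geq 0}$ subject only to the single relation $d_{0,k+1} d_{0,k} = 0$, so the free right $\Diff$-module on one generator $e$ with source $[k]$ has $\Bbbk$-basis $\{e, e\cdot d_{0,k-1}\}$ for $k\geq 1$ and $\{e\}$ for $k=0$. Equivalently, right $\Diff$-modules are the same as non-negatively graded chain complexes, and free ones are direct sums of the two-term atomic complexes just described.

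I would then let $S$ be the subset of the standard basis from the previous lemma consisting of those monomials whose rightmost $d$-factor $d_{j_m,m}$ satisfies $j_m\geq 1$, together with those monomials that contain no $d$-factor at all (including every identity $1_k$). The key computation is that for any $s\in S$ with source $[m]$ and $m\geq 1$, the product $s\cdot d_{0,m-1}$ is again a monomial in the standard form of the previous lemma whose rightmost $d$-factor is $d_{0,m-1}$: if $s$ ends in $d_{j_m,m}$ with $j_m\geq 1$, appending $d_{0,m-1}$ simply extends the strictly decreasing sequence of second indices by the new value $0<j_m$, and if $s$ has no $d$-factor at all, then $d_{0,m-1}$ becomes the unique $d$-factor. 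In particular $s\cdot d_{0,m-1}$ is a nonzero element of the standard basis of $\Simp$. This is precisely the step where the condition $j_m\geq 1$ is essential, since otherwise the relation $d_{0,m}d_{0,m-1}=0$ would kill the product.

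Conversely, every standard basis monomial of $\Simp$ whose rightmost $d$-factor has second index $0$ is of the form $s\cdot d_{0,m-1}$ for a unique $s\in S$, obtained by stripping away the trailing $d_{0,m-1}$; the remaining word either ends in $d_{j_{m+1},m+1}$ with $j_{m+1}\geq 1$ or no longer carries a $d$-factor, so in either case it lies in $S$. Thus the $\Bbbk$-basis of $\Simp$ splits disjointly into $S$ and its image $\{s\cdot d_{0,m-1}\}$, which is precisely the $\Bbbk$-basis of the free module $\bigoplus_{s\in S} s\cdot\Diff$. The canonical $\Diff$-linear map out of this free right $\Diff$-module therefore bijects $\Bbbk$-bases and so is an isomorphism, which proves freeness. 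The only point that actually requires verification is that right multiplication by $d_{0,m-1}$ on an element of $S$ produces an element that is already in the standard form of the previous lemma without triggering any further simplicial relation, and this is exactly what the choice $j_m\geq 1$ guarantees.
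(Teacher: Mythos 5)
Your proof is correct and follows essentially the same route as the paper: both use the monomial basis from the preceding lemma to split $\Simp$ as a direct sum of free rank-one right $\Diff$-modules generated by the monomials whose rightmost $d$-factor has second index at least $1$ (or which have no $d$-factor), the point being that right multiplication by $d_{0,m-1}$ carries these generators bijectively onto the remaining basis monomials. Your write-up simply makes explicit the verification that the paper compresses into its tensor-product decomposition.
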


\begin{proof}
  Since $\Simp$ has a basis by the monomials of the form given in Equation~\eqref{eq:monomials2} we
  can write $\Simp$ as a product of the form
  \[ \bigoplus_{n\geq 0} \Span_{\Bbbk}(id_n,\sigma^m_{\ell_m}\cdots \sigma^r_{\ell_r}
    d_{i_r,r}\cdots d_{i_{n+1},n+1}\mid i_r>\cdots> i_{n+1}, \ell_r>\cdots>\ell_m)\otimes
    \Span_\Bbbk(1_n, d_{0,n}) \] The result follows.
\end{proof}

\begin{proposition}
    There is a split-epimorphism of unital algebras of the form $\eta\colon \Simp\to \Diff$ defined by
    \[ \eta(\sigma^m_{\ell_m}\cdots \sigma^r_{\ell_r}d_{j_r,r}\cdots d_{j_n,n}) = 
       \begin{cases}
          d_{0,n} & \text{ if $r<n+1$, $m=n$, and $j_n=0$}\\
          0 & \text{ otherwise.}
       \end{cases}
    \]
    whose splitting is given by the inclusion $\Diff\to\Simp$.
\end{proposition}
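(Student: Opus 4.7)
My approach would be to verify the two defining properties separately: $\eta \circ \iota = \mathrm{id}_{\Diff}$ and $\eta$ being a unital algebra homomorphism. The splitting is immediate from the formula. The algebra $\Diff$ is spanned by the units $1_n$ and the boundaries $d_{0,n}$; each $d_{0,n}$ matches the first case of the formula (empty $\sigma$-part, $n = m$, $j_n = 0$), so $\eta(d_{0,n}) = d_{0,n}$, and the units $1_n$ are fixed as the degenerate empty case. Hence $\eta$ restricted to the image of $\iota$ is the identity.

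For the homomorphism property, I would exploit the basis from the preceding lemma to write $\Simp = \Diff \oplus K$ as $\Bbbk$-vector spaces, where $K$ is the $\Bbbk$-span of all basis monomials other than $1_n$ and $d_{0,n}$. With this decomposition $\eta$ becomes the linear projection to $\Diff$ along $K$, and it is a unital algebra homomorphism precisely when $K$ is a two-sided ideal of $\Simp$. So the task reduces to proving $K \cdot \Simp \subseteq K$ and $\Simp \cdot K \subseteq K$ on basis elements.

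The verification that $K$ is an ideal is a case analysis on basis monomials $x \in K$, which fall into three types: (a) those containing at least one $\sigma$-factor; (b) pure $d$-products of length $\geq 2$; (c) single $d_{j,n}$ with $j \geq 1$. For each such $x$ and any basis monomial $y$, one expands the products $xy$ and $yx$ using the simplicial identities and the relation $d_{i,n+1}d_{i,n}=0$, then rewrites in the distinguished basis via the reduction procedure from the preceding lemma. The main obstacle is that the simplicial identities $\sigma^n_i\partial^n_i = 1_n = \sigma^n_i\partial^n_{i-1}$ allow $\sigma\partial$-compositions to collapse to units, so a priori a product involving a $\sigma$-containing $x \in K$ could generate a $\Diff$-component through such cancellations. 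The heart of the argument is to track signs and indices through the change of basis from $\partial$-monomials to $d$-monomials and to verify that each potential collapse still leaves a distinguishing $K$-feature, namely a surviving $\sigma$-factor, an additional $d$-factor, or a $d_{j,n}$ with $j \geq 1$, so that the result remains in $K$.
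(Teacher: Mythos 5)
The paper states this proposition without proof, so there is nothing in the source to compare your argument against; judged on its own terms, your proposal has a genuine gap, and it sits exactly where you locate ``the heart of the argument.'' Your reduction is correct: writing $\Simp = \Diff \oplus K$ with $K$ the span of the basis monomials other than the $1_n$ and the $d_{0,n}$, the projection $\eta$ is a unital algebra homomorphism if and only if $K$ is a two-sided ideal. But $K$ is not a two-sided ideal. Take $n=0$: the element $d_{0,0}=\partial^0_0$ lies in $\Diff$, the element $\sigma^0_0$ lies in $K$, and in the categorical algebra their product, taken in whichever order the composition $\sigma^0_0\circ\partial^0_0$ is defined, equals $\mathrm{id}_{[0]}=1_0$, which lies in $\Diff$ and not in $K$. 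So the collapse you flag as ``the main obstacle'' does occur and leaves no surviving $K$-feature: it produces a bare unit. Consequently $\eta(\sigma^0_0\cdot d_{0,0})=\eta(1_0)=1_0$ while $\eta(\sigma^0_0)\cdot\eta(d_{0,0})=0\cdot d_{0,0}=0$, and multiplicativity fails. More generally $\sigma^n_0\circ d_{0,n}=1_n+(\text{terms in }K)$, so the failure occurs in every degree.

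No amount of careful bookkeeping in the case analysis will repair this, because no unital algebra retraction $\Simp\to\Diff$ exists at all. Indeed, any algebra map $\eta$ with $\eta|_{\Diff}=\mathrm{id}_{\Diff}$ satisfies $\eta(x)=\eta(1_a)\eta(x)\eta(1_b)=1_a\,\eta(x)\,1_b$ for $x\in 1_a\Simp 1_b$, so $\eta$ preserves the hom-grading. Since $\Diff=\bigoplus_n\bigl(\Bbbk 1_n\oplus\Bbbk d_{0,n}\bigr)$ and $d_{0,n}$ lives in the hom-component of $\Simp$ opposite to the one containing $\sigma^n_i$, the target component for $\eta(\sigma^n_i)$ is zero, forcing $\eta(\sigma^n_i)=0$; the simplicial identity $\sigma^n_i\circ\partial^n_i=1_n$ then gives $1_n=\eta(\sigma^n_i)\eta(\partial^n_i)=0$, a contradiction. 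What the displayed formula does define is a $\Bbbk$-linear retraction of the inclusion $\Diff\to\Simp$ (and that weaker claim is what the first part of your argument actually establishes); the assertion that it is a homomorphism of unital algebras cannot be proved, and any later use of $\eta$ as an algebra epimorphism --- for instance to define $\Res^{\Diff}_{\Simp}$ or to present $\Diff$ as $\Simp/\ker\eta$ --- needs to be justified by other means.
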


\subsection{Induction, coinduction and restriction}

Given two algebras $A$ and $B$, and an algebra morphism $f\colon A\to B$, one can consider $B$ as
an $A$-module via $f$.  There are three important functors that we are going to use repeatedly in
this text. These are
\begin{enumerate}[(i)]
\item The induction functor $\Ind_{A\to B}\colon \rmod{A}\to\rmod{B}$ given by
  $(\ \cdot\ )\otimes_{A}B $,
\item The coinduction functor $\CoInd_{A\to B}\colon \rmod{A}\to\rmod{B}$ given by
  $\Hom_{A}(B,\ \cdot\ )$, and
\item The restriction functor $\Res_{A\to B}\colon \rmod{A}\to\rmod{B}$ given by considering a
  $B$-module as an $A$-module via the morphism $f\colon A\to B$.
\end{enumerate}

The morphisms we consider below are the inclusion of algebras $\Diff\to \Simp$, and the epimorphism
of algebras $\Simp\to \Diff$ we defined above. There is another epimorphism (but of bimodules of a
certain type) $\Simp G\to \Upsilon$ that we will consider, but it is defined in
Definition~\ref{defn:upsilon} below.

\section{Homology and Homotopy of Crossed Simplicial Modules}

\subsection{Point objects}

We define the trivial left $\Simp$-module $\Bbbk_\bullet$ as $\Bbbk_n = \Bbbk$ where all generators of $\Simp$ act by identity.  Similarly, we define the trivial left $\Diff$-module $\Bbbk[0]$ as the $\Diff$-module which is $0$ everywhere except degree 0 where we have $\Bbbk$, and the generators of the subring $\Diff$ act by 0 except $1_0$ which acts as the identity on $\Bbbk$ in degree 0. Notice that we have $\Bbbk_\bullet = \Ind_{\Diff\to\Simp}\Bbbk[0] = \CoInd_{\Diff\to\Simp}\Bbbk[0]$. 

\begin{proposition}\label{prop:quasi-equivalence}
  We have
  $\Res_{\Diff\to\Simp} \Bbbk_\bullet$ is quasi-isomorphic to $\Bbbk[0]$. Thus we have
  \[ \Tor^{\Simp}_*(X_\bullet,\Bbbk_\bullet)\cong 
     \Tor^{\Diff}_*(\Res_{\Diff\to\Simp}X_\bullet,\Bbbk[0])\cong 
     H_*(\Res_{\Diff\to\Simp}X_\bullet) 
  \] and
  \[ \Ext_{\Simp}^*(X_\bullet,\Bbbk_\bullet^\vee) 
     \cong \Tor^{\Simp}_*(X_\bullet,\Bbbk_\bullet)^\vee
     \cong H_*(\Res_{\Diff\to\Simp}X_\bullet)^\vee
  \]
\end{proposition}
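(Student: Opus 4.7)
The plan is to handle the quasi-isomorphism first and then derive the Tor and Ext identities as formal consequences, using a change-of-rings argument along $\Diff \hookrightarrow \Simp$ for the Tor side and $\Bbbk$-linear duality for the Ext side.

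For the quasi-isomorphism $\Res^{\Simp}_{\Diff}\Bbbk_\bullet \simeq \Bbbk[0]$, I would compute $\Res^{\Simp}_{\Diff}\Bbbk_\bullet$ explicitly. Since each $(\Bbbk_\bullet)_n = \Bbbk$ and every coface operator $\partial^n_i$ acts as the identity, the differential $d_n = \sum_{i=0}^n (-1)^i \partial^n_i$ reduces to the alternating sum $\sum_{i=0}^n (-1)^i$, giving a complex with $\Bbbk$ in each degree and differentials alternating between $0$ and $\pm 1$. Direct calculation, or equivalently an ``extra degeneracy'' contracting homotopy in positive degrees, shows the (co)homology is concentrated in degree $0$ with value $\Bbbk$, so the evident comparison map to $\Bbbk[0]$ is a quasi-isomorphism. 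This is the classical normalization fact for constant (co)simplicial vector spaces.

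For the Tor identity, I invoke the earlier result that $\Simp$ is free as a right $\Diff$-module together with the identification $\Bbbk_\bullet = \Ind^{\Simp}_{\Diff}\Bbbk[0] = \Simp \otimes_\Diff \Bbbk[0]$. Let $P_\bullet \to X_\bullet$ be a projective resolution in $\rmod{\Simp}$; by freeness, each $P_n$ remains projective after restriction, so $\Res^{\Simp}_{\Diff} P_\bullet \to \Res^{\Simp}_{\Diff}X_\bullet$ is a projective $\Diff$-resolution. Then
\[ P_\bullet \otimes_\Simp \Bbbk_\bullet \;=\; P_\bullet \otimes_\Simp \bigl(\Simp \otimes_\Diff \Bbbk[0]\bigr) \;=\; (\Res^{\Simp}_{\Diff}P_\bullet) \otimes_\Diff \Bbbk[0], \]
and passing to homology yields $\Tor^\Simp_*(X_\bullet, \Bbbk_\bullet) \cong \Tor^\Diff_*(\Res^{\Simp}_{\Diff}X_\bullet, \Bbbk[0])$. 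For the final equality, I would build the canonical free resolution $\cdots \to \Diff \cdot 1_2 \to \Diff \cdot 1_1 \to \Diff \cdot 1_0 \to \Bbbk[0]$ over $\Diff$ (whose differentials are right multiplication by $d$, exploiting the equivalence of $\Diff$-modules with chain complexes); tensoring $\Res^{\Simp}_{\Diff}X_\bullet$ against it reconstructs $\Res^{\Simp}_{\Diff}X_\bullet$ as a complex with its own differential, whose homology is precisely $H_*(\Res^{\Simp}_{\Diff}X_\bullet)$.

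The Ext identity follows by $\Bbbk$-linear duality. Since $\Bbbk$ is a field, $(-)^\vee = \Hom_\Bbbk(-,\Bbbk)$ is exact, and the hom-tensor adjunction gives $\Hom_\Simp(-, M^\vee) \cong (- \otimes_\Simp M)^\vee$ for any left $\Simp$-module $M$. Applied to a projective resolution $P_\bullet \to X_\bullet$ and followed by cohomology, this produces $\Ext^*_\Simp(X_\bullet, \Bbbk_\bullet^\vee) \cong \Tor^\Simp_*(X_\bullet, \Bbbk_\bullet)^\vee$; chaining with the Tor computation gives the final isomorphism. The main obstacle is Step 1 -- verifying the quasi-isomorphism carefully against the paper's sign and indexing conventions -- since the remaining reductions are essentially formal, given the freeness of $\Simp$ over $\Diff$ already established.
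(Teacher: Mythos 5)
Your argument is correct in substance, but it takes a genuinely different route from the paper's. The paper's proof constructs a single explicit complex of left $\Simp$-modules $\Simp_n$, spanned by monomials $d_{i_m,m}\cdots d_{i_n,n}$, with $\Simp_0=\Bbbk_\bullet$ and differentials given by right multiplication by $d_{0,n-1}$; exactness is deduced from the freeness of $\Simp$ over $\Diff$, and the shifted complex is the resolution of $\Bbbk_\bullet$ from which all the displayed isomorphisms are meant to be read off. You instead modularize: a direct cohomology computation for the constant cosimplicial vector space, a Shapiro-type change-of-rings isomorphism (via $\Bbbk_\bullet=\Ind^{\Simp}_{\Diff}\Bbbk[0]$ and the freeness of $\Simp$ as a right $\Diff$-module) for the first Tor isomorphism, the small free resolution $\cdots\to\Diff\cdot 1_1\to\Diff\cdot 1_0\to\Bbbk[0]$ for the second, and exactness of $(-)^\vee$ plus hom-tensor adjunction for the Ext statement. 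Both arguments rest on the same input -- the $d_{i,n}$ monomial basis and the resulting freeness of $\Simp$ over $\Diff$ -- and your induced resolution $\Simp\otimes_{\Diff}(\Diff\cdot 1_n)$ is essentially the paper's $\Simp_n$ in disguise. What your packaging buys is that each asserted isomorphism is actually derived (the paper's proof stops at the exactness of the resolution and says nothing explicit about the quasi-isomorphism $\Res^{\Simp}_{\Diff}\Bbbk_\bullet\simeq\Bbbk[0]$ or the Ext claim), and that the one delicate input is isolated: the identification $\Bbbk_\bullet\cong\Ind^{\Simp}_{\Diff}\Bbbk[0]$, which the paper asserts without proof just before the proposition. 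Your closing caveat about conventions is not an idle worry: both that identification and your degree-zero computation require $d_n$ to be the alternating sum of all $n+2$ cofaces $[n]\to[n+1]$, whereas the paper's literal formula $d_n=\sum_{i=0}^{n}(-1)^i\partial^n_i$ omits one coface, under which reading the constant cosimplicial object would come out acyclic; you should state explicitly that you work with the full alternating sum.
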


\begin{proof}
  Let $\Simp_n$ be the left $\Simp$-ideal generated by $1_n$.  We define differentials
  $d_n\colon \Simp_n\to\Simp_{n-1}$ via right multiplication by $d_{0,n-1}$. Since $\Simp$ is a
  free $\Diff$-module by Proposition~\ref{prop:free}, if $\Psi\in ker(d_n)$ then
  $\Psi = \Psi' d_{0,n}$. Thus, we show that this chain complex is exact except for degree 0. We
  then observe that $\Simp_0/im(d_0)$ is $\Bbbk_\bullet$. 
\end{proof}

\subsection{Quasi-isomorphisms and homotopy equivalences}

Recall that a morphism of $\Diff$-modules $f_*\colon X_*\to Y_*$ is called \emph{a
  quasi-isomorphism} if the induced maps $H_n(f_*)\colon H_n(X_*)\to H_n(Y_*)$ in homology are
isomorphisms.  On the simplicial side, given a simplicial module $X_\bullet$, one can define
\emph{combinatorial simplicial group} $\pi_n(X_\bullet)$ as follows. First, we define
\[ Z_n = \{ x\in X_n\mid \partial_i(x)=0, \text{ for all } i=0,\ldots,n\} \] 
Then we write an equivalence relation $\sim$ by letting $x\sim x'$ in $Z_n$ if there is an element $y\in X_{n+1}$ such that
\[ \partial_i(y) =
  \begin{cases}
    0 & \text{ if } 0\leq i< n\\
    x & \text{ if } i=n\\
    x' & \text{ if } i=n+1
  \end{cases}\] And finally, we define $\pi_n(X_\bullet) = Z_n/\!\!\sim$.  Now, a morphism
$f_\bullet\colon X_\bullet\to Y_\bullet$ of $\Simp$-modules as a \emph{homotopy equivalence} if the induced morphisms on the homotopy groups $\pi_n(f_\bullet)\colon \pi_n(X_\bullet)\to \pi_n(Y_\bullet)$ are isomorphisms.

\begin{theorem}\label{thm:TorExt}
  Assume $f_\bullet\colon X_\bullet\to Y_\bullet$ is a morphism of simplicial $\Bbbk$-vector spaces. Then the following are equivalent:
  \begin{enumerate}
      \item $f_\bullet$ is a homotopy equivalence.
      \item $\Res_{\Diff\to\Simp}(f_\bullet)\colon \Res_{\Diff\to\Simp}(X_\bullet)\to \Res_{\Diff\to\Simp}(Y_\bullet)$  is a quasi-isomorphism.
      \item $\Tor^{\Simp}_n(f_\bullet,\Bbbk_\bullet)\colon \Tor^{\Simp}_n(X_\bullet,\Bbbk_\bullet) \to \Tor^{\Simp}_n(Y_\bullet,\Bbbk_\bullet)$ is an isomorphism for every $n\geq 0$.
      \item $\Ext_{\Simp}^n(f_\bullet,\Bbbk_\bullet^\vee)\colon \Ext_{\Simp}^n(Y_\bullet,\Bbbk_\bullet^\vee) \to \Ext_{\Simp}^n(X_\bullet,\Bbbk_\bullet^\vee)$ is an isomorphism for every $n\geq 0$.
  \end{enumerate}
\end{theorem}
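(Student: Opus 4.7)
The plan is to decouple conditions (2)-(4), which collapse to a single statement, from (1) $\Leftrightarrow$ (2), which carries the real content. For the first part, Proposition~\ref{prop:quasi-equivalence} provides natural isomorphisms $\Tor_n^{\Simp}(X_\bullet,\Bbbk_\bullet) \cong H_n(\Res^{\Simp}_{\Diff} X_\bullet)$ and $\Ext_{\Simp}^n(X_\bullet, \Bbbk_\bullet^\vee) \cong H_n(\Res^{\Simp}_{\Diff} X_\bullet)^\vee$; since $\Bbbk$ is a field, the dualization functor reflects isomorphisms, so each of (2), (3), (4) amounts to the single statement that $H_n(\Res^{\Simp}_{\Diff} f_\bullet)$ is an isomorphism for every $n$.

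For (1) $\Leftrightarrow$ (2), the plan is to construct a natural isomorphism $\pi_n(X_\bullet) \cong H_n(\Res^{\Simp}_{\Diff} X_\bullet)$, after which the equivalence on morphisms is automatic by the five-lemma. The identification factors through the normalized Moore complex $N_n(X_\bullet) = \bigcap_{i=0}^{n-1} \ker(\partial^n_i)$ equipped with differential $(-1)^n \partial^n_n$, using two standard facts. First, the inclusion $N(X_\bullet) \hookrightarrow \Res^{\Simp}_{\Diff} X_\bullet$ is a quasi-isomorphism because the complementary subcomplex generated by degeneracies splits off as an acyclic subcomplex in $\Bbbk$-vector spaces. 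Second, the $n$-cycles of $N(X_\bullet)$ are exactly $Z_n$ (the remaining face $\partial^n_n$ plays the role of the differential), yielding a canonical surjection $Z_n \twoheadrightarrow H_n(N(X_\bullet))$.

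The main obstacle is matching the two quotient relations on $Z_n$: the boundary subspace $B_n$ in $N(X_\bullet)$ consists of $\partial^{n+1}_{n+1}$-images of elements $y \in X_{n+1}$ killing every face up to index $n$, whereas combinatorial witnesses need only kill the first $n$ faces and are then free on $\partial^{n+1}_n$ and $\partial^{n+1}_{n+1}$. I would resolve this by a Moore-style correction: given a combinatorial witness $y$ of $x \sim x'$, the element $y - \sigma^n_n(x)$ lies in $N_{n+1}(X_\bullet)$ and maps to $x' - x$ under $\partial^{n+1}_{n+1}$; conversely, given $y' \in N_{n+1}(X_\bullet)$ with $\partial^{n+1}_{n+1}(y') = x - x'$, the element $\sigma^n_n(x - x') - y'$ is a combinatorial witness of $(x - x') \sim 0$, and additivity of $\sim$ in the simplicial abelian group upgrades this to $x \sim x'$. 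Once this bijection is in hand, naturality in $X_\bullet$ is immediate from the explicit construction, and combining with the first paragraph closes the full chain (1) $\Leftrightarrow$ (2) $\Leftrightarrow$ (3) $\Leftrightarrow$ (4).
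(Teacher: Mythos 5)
Your proposal is correct and follows the same two-step decomposition as the paper's proof: Proposition~\ref{prop:quasi-equivalence} collapses (2)--(4) into the single condition that $H_n(\Res^{\Simp}_{\Diff}f_\bullet)$ is an isomorphism, and (1) $\Leftrightarrow$ (2) reduces to the natural isomorphism $\pi_n(X_\bullet)\cong H_n(\Res^{\Simp}_{\Diff}X_\bullet)$. The only difference is that the paper simply cites Weibel, Thm.~8.3.8 for that isomorphism, whereas you reprove it via the normalized complex and the $\sigma^n_n$-correction of combinatorial witnesses; your computation is consistent with the simplicial identities (and the ``five-lemma'' remark is unnecessary, naturality of the isomorphism already suffices), so this is a more self-contained rendering of the same argument.
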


\begin{proof}
  By Proposition~\ref{prop:quasi-equivalence} we know that the induced morphisms are isomorphisms
  iff $\Res_{\Diff\to\Simp}(f_\bullet)$ is a quasi-isomorphism. By~\cite[Thm
  8.3.8]{Weibel:HomologicalAlgebra} we know that
  $\pi_n(X_\bullet)\cong H_n(\Res_{\Diff\to\Simp}(X_\bullet))$. This means $f_\bullet$ is a
  homotopy equivalence iff $\Res_{\Diff\to\Simp}(f_\bullet)$ is a quasi-isomorphism.  The result
  follows.
\end{proof}

\begin{corollary}
  Let $\Bbbk[0]$ be the right $\Simp$-module $\Bbbk$ concentrated at degree 0, and where all generators act by 0 except for $1_0$.  Then the natural embedding $\Bbbk[0]\to \Bbbk_\bullet$ is a homotopy equivalence.
\end{corollary}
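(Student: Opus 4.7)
The plan is to reduce the claim to Theorem~\ref{thm:TorExt}: a morphism of simplicial $\Bbbk$-vector spaces is a homotopy equivalence iff its image under $\Res^{\Simp}_{\Diff}$ is a quasi-isomorphism of $\Diff$-modules. Writing $\phi\colon \Bbbk[0]\to\Bbbk_\bullet$ for the natural embedding, it therefore suffices to check that $\Res^{\Simp}_{\Diff}(\phi)$ induces an isomorphism in homology.

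First I would identify the two restrictions. On the source, $\Res^{\Simp}_{\Diff}\Bbbk[0]$ agrees with the $\Diff$-complex $\Bbbk[0]$ defined at the start of Section~3: by hypothesis every generator of $\Simp$ other than $1_0$ annihilates $\Bbbk[0]$, so in particular every $d_n\in\Diff$ acts by $0$, and the restricted complex is $\Bbbk$ concentrated in degree $0$ with vanishing differentials. On the target, Proposition~\ref{prop:quasi-equivalence} tells us $\Res^{\Simp}_{\Diff}\Bbbk_\bullet$ is quasi-isomorphic to $\Bbbk[0]$, so $H_0 \cong \Bbbk$ and higher homology vanishes; moreover, the resolution $\Simp_\bullet$ built in the proof of that proposition augments onto $\Bbbk_\bullet = \Simp_0$ by the identity at degree $0$, so the class of $1\in(\Bbbk_\bullet)_0$ represents a generator of $H_0(\Res^{\Simp}_{\Diff}\Bbbk_\bullet)$.

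Since the natural embedding is the identity $\Bbbk\to\Bbbk$ at degree $0$ and zero in higher degrees (where the source vanishes), $H_0(\Res^{\Simp}_{\Diff}(\phi))$ sends the generator of $H_0(\Res\Bbbk[0])$ to the generator of $H_0(\Res\Bbbk_\bullet)$, hence is an isomorphism, while higher-degree homology vanishes on both sides. Thus $\Res^{\Simp}_{\Diff}(\phi)$ is a quasi-isomorphism, and Theorem~\ref{thm:TorExt} delivers the homotopy equivalence. The only substantive point in the argument is unpacking Proposition~\ref{prop:quasi-equivalence} enough to recognize that its quasi-isomorphism sends $[1\in(\Bbbk_\bullet)_0]$ to the generator of $H_0(\Bbbk[0])$; once that identification is made, everything is formal.
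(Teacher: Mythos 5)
Your argument is correct and is precisely the intended derivation: the paper states this corollary without any written proof, immediately after Theorem~\ref{thm:TorExt} and Proposition~\ref{prop:quasi-equivalence}, and the implicit justification is exactly the reduction you carry out (restrict along $\Diff\to\Simp$, check a quasi-isomorphism, invoke the equivalence of conditions (1) and (2)). Your added care in verifying that $H_0$ of the embedding actually carries the generator $[1]$ to the generator, rather than merely observing that both sides have abstractly isomorphic homology, is the one non-formal point and you handle it correctly.
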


\subsection{Homotopy equivalence for crossed simplicial modules}

Assume $\Simp G$ is a crossed simplicial group and consider the embedding $\Simp\to \Simp G$ which
comes with its own induction and restriction functors $\Res_{\Simp\to\Simp G}$ and
$\Ind_{\Simp\to\Simp G}$. 

We define $\Bbbk_\bullet$ to be the left $\Simp G$-module where each face map, degeneracy map, and
the group element act by identity. Now, a morphism of crossed simplicial $\Simp G$-modules
$f_\bullet\colon X_\bullet\to Y_\bullet$ is called a homotopy equivalence if the induced maps on
the torsion groups
\[ \Tor^{\Simp G}_n(f_\bullet,\Bbbk_\bullet)\colon \Tor^{\Simp G}_n(X_\bullet,\Bbbk_\bullet)\to \Tor^{\Simp G}_n(Y_\bullet,\Bbbk_\bullet) 
\] are isomorphisms. Instead of the torsion groups, if we wanted to use extension groups we would require \[ \Ext^{\Simp G}_n(f_\bullet,\Bbbk^\vee_\bullet)\colon \Ext^{\Simp G}_n(Y_\bullet,\Bbbk^\vee_\bullet)\to \Ext^{\Simp G}_n(X_\bullet,\Bbbk^\vee_\bullet) \] be isomorphisms. But, these conditions are equivalent.

In order to be able to say anything about homotopy equivalences of crossed simplicial modules, we
would need a resolution of the point object $\Bbbk_\bullet$ as a $\Simp G$-module.  The following
is a slight modification of \cite[Theorem 5.12 and Proposition 5.13]{Kayg11}.

\begin{proposition}\label{prop:EquivariantHomotopyEquivalence}
  Assume that each $G_n$ is finite, i.e. $G\subseteq \Simp G$ is a semi-simple subalgebra. Then
  the absolute derived functors $\Tor^{\Simp G}$ and $\Ext_{\Simp G}$ are isomorphic to their
  relative counterparts $\Tor^{(\Simp G|G)}$ and $\Ext_{(\Simp G|G)}$.  Moreover, morphism of
  $\Simp G$-modules $f_\bullet\colon X_\bullet\to Y_\bullet$ is a homotopy equivalence iff
  \begin{equation}
    \label{eq:crossed-simplicial-homotopy}
    \Tor^{\Simp}_n(\Res_{\Simp\to\Simp G} f_\bullet,\Bbbk_\bullet)_G\colon \Tor^{\Simp}_n(\Res_{\Simp\to\Simp G} X_\bullet,\Bbbk_\bullet)_G\to \Tor^{\Simp}_n(\Res_{\Simp\to\Simp G} Y_\bullet,\Bbbk_\bullet)_G
  \end{equation}
  is an isomorphism for every $n\geq 0$ where $(U_\bullet)_G$ denotes level-wise $G$-coinvariants of a $G$-module $U_\bullet$.
\end{proposition}

\begin{proof}
  Consider the two-sided bar complex $\CB_*(X_\bullet,\Simp G,\Bbbk_\bullet)$ 
  \[ X_\bullet\otimes_{\Bbbk_\bullet}\underbrace{\Simp
      G\otimes_{\Bbbk_\bullet}\cdots\otimes_{\Bbbk_\bullet}\Simp G
    }_{n\text{-times}}\otimes_{\Bbbk_\bullet} \Bbbk_\bullet \] that calculates
  $\Tor^{\Simp G}_*(X_\bullet,\Bbbk_\bullet)$ and its relative counterpart
  $\CB_*(\Simp G,\Simp G|G,\Bbbk_\bullet)$
  \[ X_\bullet\otimes_{G}\underbrace{\Simp G\otimes_{G}\cdots\otimes_{G}\Simp G
    }_{n\text{-times}}\otimes_{G} \Bbbk_\bullet \] that calculates the $G$-relative Tor groups
  $\Tor^{\Simp G|G}_*(X_\bullet,\Bbbk_\bullet)$. Note that since each $G_n$ is finite and $\Bbbk$
  has characteristic 0, the absolute Tor groups and the relative Tor groups are isomorphic by
  \cite[Proposition 2.5]{Kayg11}.  Since we have a basis of the form~\eqref{eq:monomials3}, we can
  reduce the relative complex to
  \[ X_\bullet\otimes_{G}\left(\underbrace{\Simp\otimes_{\Bbbk_\bullet}\cdots\otimes_{\Bbbk_\bullet}\Simp}_{n\text{-times}}\otimes_{\Bbbk_\bullet} \Bbbk_\bullet\right)
  \]
  Let us notice two things: (i) the right hand side component comes from the $\Simp$-resolution of
  $\Bbbk_\bullet$ with a diagonal action of $G$, and (ii) $X_\bullet\otimes_G Y_\bullet$ can be
  written as a coinvariant module as
  $\Bbbk_\bullet \otimes_G (X^{op}_\bullet\otimes_{\Bbbk_\bullet} Y_\bullet)$ where the left
  $G$-action on $X_\bullet$ is converted to a right action of $G$ on $X_\bullet^{op}$ via
  $g\tr x:= x\tl g^{-1}$. Thus we get
  \[ \Bbbk_\bullet\otimes_G
    \left(X^{op}_\bullet\otimes_{\Bbbk_\bullet}\underbrace{\Simp\otimes_{\Bbbk_\bullet}\cdots\otimes_{\Bbbk_\bullet}\Simp}_{n\text{-times}}\otimes_{\Bbbk_\bullet}
      \Bbbk_\bullet\right)\] The right tensor component calculates
  $\Tor^{\Simp}_*(\Res^{\Simp G}_\Simp X^{op}_\bullet,\Bbbk_\bullet)=\Tor^{\Simp}_*(\Res^{\Simp
    G}_\Simp X_\bullet,\Bbbk_\bullet)$. The result follows.
\end{proof}

\begin{corollary}\label{cor:CoinvariantHomotopy}
  Let $f_\bullet\colon X_\bullet\to Y_\bullet$ be a morphism of $\Simp G$-modules with the same
  assumptions in Proposition~\ref{prop:EquivariantHomotopyEquivalence}. Then $f_\bullet$ is a
  homotopy equivalence iff the induced morphisms
  $\Ext_{\Simp}^n(\Res_{\Simp\to\Simp G} f_\bullet,\Bbbk_\bullet^\vee)_G$ and
  $\Ext_{\Simp}^n(\Res_{\Simp\to\Simp G} f_\bullet,\Bbbk_\bullet^\vee)^G$ are isomorphisms for every
  $n\geq 0$.
\end{corollary}

\begin{remark}
  From this point on we will assume that $G$ is semisimple in $\Simp G$.
\end{remark}

\section{The Dold-Kan Equivalence Re-imagined}

\subsection{The Moore functor}

Let $X_\bullet$ be simplicial $\Bbbk$-module, i.e. a right $\Simp$-module. For every $n\in\B{N}$
let us define $N_n(X_\bullet)$ as
\begin{equation}
    N_n(X_\bullet) = \{ x\in X_n\mid x\partial_i = 0 \text{ for } 1\leq i\leq n \}
\end{equation}
together with the differentials $\partial_0^n$ acting on the right.

\begin{lemma}
  For every right $\Simp$-module $X_\bullet$, $N_*(X_\bullet)$ is exactly the vector subspace of
  $X_\bullet$ on which $ker(\eta)$ acts by 0. In other words, $N_*$ is the functor
  $\CoInd_{\Simp\to\Diff}\colon \rmod{\Simp}\to \rmod{\Diff}$.
\end{lemma}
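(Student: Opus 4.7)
My plan is to identify both $N_*(X_\bullet)$ and $\CoInd^{\Diff}_{\Simp}(X_\bullet)$ with the common subspace $X_\bullet^{\ker\eta} = \{x \in X_\bullet : x \cdot a = 0 \text{ for every } a \in \ker(\eta)\}$, which yields both assertions of the lemma at once.

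For the identification $\CoInd^{\Diff}_{\Simp}(X_\bullet) \cong X_\bullet^{\ker\eta}$, I would use that $\eta$ is a split surjection of algebras, giving an isomorphism $\Diff \cong \Simp/\ker(\eta)$ of right $\Simp$-modules. Then $\CoInd^{\Diff}_{\Simp}(X_\bullet) = \Hom_{\Simp}(\Diff, X_\bullet) = \Hom_{\Simp}(\Simp/\ker\eta, X_\bullet)$. Via the Yoneda-style identification $\Hom_{\Simp}(1_n\Simp, X_\bullet) \cong X_n$ applied componentwise, a right-$\Simp$-linear map $\Simp/\ker\eta \to X_\bullet$ is exactly a tuple $(x_n)_n$ with $x_n \in X_n$ and $x_n \cdot a = 0$ for every $a \in \ker(\eta) \cap 1_n\Simp$; this gives a natural isomorphism onto $X_\bullet^{\ker\eta}$, with its right $\Diff$-module structure inherited from the $\Simp$-action on $X_\bullet$.

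The inclusion $X_\bullet^{\ker\eta} \subseteq N_*(X_\bullet)$ is straightforward from the identity $\partial^{n-1}_i = (-1)^i(d_{i,n-1} - d_{i+1,n-1})$ for $i \geq 1$ together with the fact that $\eta$ annihilates each $d_{j,k}$ with $j \geq 1$ (by the explicit formula for $\eta$). Hence $\partial^{n-1}_i \in \ker(\eta)$ for every $i \geq 1$, and any element annihilated by $\ker(\eta)$ is in particular annihilated by each such $\partial^{n-1}_i$, placing it in $N_n(X_\bullet)$.

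The reverse inclusion $N_*(X_\bullet) \subseteq X_\bullet^{\ker\eta}$ is the principal step and the main obstacle. Using the $\sigma$-$d$-basis of $\Simp$ from the earlier change-of-basis lemma, every basis monomial of $\ker(\eta) \cap 1_n\Simp$ is either a pure product of $d$-operators where some $d_{j,k}$ has $j \geq 1$ (or has length at least two), or contains at least one degeneracy $\sigma$. In the pure-$d$ case, expanding each $d_{j,k}$ with $j \geq 1$ as a signed sum of $\partial^k_\ell$ with $\ell \geq 1$ reduces the right action on $x$ to a linear combination of terms of the form $x \cdot \partial^{n-1}_i \cdot (\text{rest})$ with $i \geq 1$, all of which vanish by the $N_n$-hypothesis. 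In the degeneracy case, one uses the simplicial identities together with the epi-mono factorization of morphisms in $\Simp$ and the free right $\Diff$-module structure on $\Simp$ to rewrite the monomial so as to expose a leading face operator $\partial^{n-1}_i$ with $i \geq 1$, reducing back to the previous case. The careful bookkeeping in this degeneracy case — in particular, tracking how the $\sigma$'s interact with the decomposition $\Simp = \Diff \oplus \ker(\eta)$ given by the change-of-basis lemma — is the principal technical hurdle.
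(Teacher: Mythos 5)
Your reduction of $\CoInd^{\Diff}_{\Simp}(X_\bullet)$ to the annihilator $X_\bullet^{\ker\eta}$ and the inclusion $X_\bullet^{\ker\eta}\subseteq N_*(X_\bullet)$ are both fine, and your handling of the pure-$d$ monomials in the reverse inclusion also works (every basis monomial $d_{j_N,N}\cdots d_{j_m,m}$ of length at least two has $j_N\geq 1$, so its leftmost, first-acting factor already kills $N_*(X_\bullet)$). The genuine gap is exactly the step you defer: the degeneracy case of the reverse inclusion cannot be completed, because the inclusion $N_*(X_\bullet)\subseteq X_\bullet^{\ker\eta}$ is false whenever $X_\bullet\neq 0$. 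Each degeneracy $\sigma^n_j$ is a basis monomial containing a $\sigma$, hence lies in $\ker(\eta)$ by the explicit formula for $\eta$; but $\sigma^n_j\partial^n_j=1_n$ in $\Simp$, so for any $x\in N_n(X_\bullet)$ one has $(x\cdot\sigma^n_j)\cdot\partial^n_j=x$, and therefore $x\cdot\sigma^n_j\neq 0$ unless $x=0$. In particular the rewriting you propose, ``exposing a leading face operator $\partial^{n-1}_i$ with $i\geq 1$,'' is impossible for the monomial $\sigma^n_j$ itself: a surjection $[n+1]\to[n]$ cannot factor through any coface $[n-1]\to[n]$. No amount of bookkeeping closes this case; the annihilator of $\ker(\eta)$ is in fact $0$ in every degree, not $N_*(X_\bullet)$.

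Be aware that the paper's own proof does not supply the missing argument either: it equates $\Hom_{\Simp}(\Diff,X_\bullet)$ with the set cut out by the conditions $x\,d_{i,n-1}=0$ for $i\geq 1$ alone, silently discarding the conditions coming from the components of $\ker(\eta)$ that contain degeneracies. The root of the difficulty already sits in the Proposition defining $\eta$: a unital algebra homomorphism $\Simp\to\Diff$ cannot kill the degeneracies, since $\eta(\sigma^n_j)\eta(\partial^n_j)=0$ while $\eta(\sigma^n_j\partial^n_j)=\eta(1_n)=1_n$. So your instinct that the degeneracy case is the principal technical hurdle is correct, but it is an obstruction to the statement as formulated rather than a hurdle to be cleared; any correct identification of $N_*$ with a coinduction-type functor must treat the degeneracies differently (for instance, by imposing only the conditions coming from face maps, which is what the classical normalized complex does).
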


\begin{proof}
  Let us consider the functor $\CoInd_{\Simp\to\Diff}\colon \rmod{\Simp}\to\rmod{\Diff}$ on the
  objects:
  \begin{align}
    \CoInd_{\Simp\to\Diff}X_\bullet 
    = & \Hom_{\Diff}(\Diff,X_\bullet) \\
    = & \{x\in X_n\mid x d_{i,n-1} = 0 \text{ for } n\geq 1, \text{ and } 1\leq i\}\\
    = & \{x\in X_n\mid x \partial^{n-1}_i = 0 \text{ for } n\geq 1, \text{ and } 1\leq i\} 
  \end{align}
  Then when we reduce the $\Diff$-action to an $\Simp$-action, we declare all $d_{i,n}$'s act by 0
  for $i\geq 1$. Thus $\CoInd_{\Simp\to\Diff}X_\bullet = N_*(X_\bullet)$.
\end{proof}

\begin{proposition}[{\cite[8.3.8]{Weibel:HomologicalAlgebra}}{\cite[Theorem III 2.1]{GoerssJardine:SimplicialHomotopy}}]\label{prop:equivalence}
  The functors $\CoInd_{\Simp\to\Diff}$ and $\Res_{\Diff\to\Simp}$ are naturally isomorphic after
  we localize $\rmod{\Simp}$ with respect to homotopy equivalences and $\rmod{\Diff}$ with respect
  to quasi-isomorphisms.
 \end{proposition}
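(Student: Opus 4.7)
The plan is to exhibit a natural transformation $\iota\colon \CoInd_{\Simp}^{\Diff}\Rightarrow \Res^{\Simp}_{\Diff}$ whose components are quasi-isomorphisms; since both functors will then descend to the indicated localizations, $\iota$ provides the desired natural isomorphism there.

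First I would define the component $\iota_{X_\bullet}\colon \CoInd_{\Simp}^{\Diff}X_\bullet \to \Res^{\Simp}_{\Diff}X_\bullet$ for a right $\Simp$-module $X_\bullet$ as the subset inclusion, using the preceding lemma's identification $\CoInd_{\Simp}^{\Diff}X_\bullet = N_*(X_\bullet) \subseteq X_\bullet$. This is plainly natural in $X_\bullet$. To see that it is a morphism of $\Diff$-modules, note that for $x \in N_n(X_\bullet)$,
\[
  x\cdot d_n \;=\; \sum_{i=0}^{n}(-1)^{i}\, x\partial^n_i \;=\; x\partial^n_0,
\]
because $x\partial^n_i = 0$ whenever $i\geq 1$, and the right-hand side is exactly the Moore differential on $N_*(X_\bullet)$.

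Next I would verify that each $\iota_{X_\bullet}$ is a quasi-isomorphism. By Theorem~\ref{thm:TorExt} there is a natural isomorphism $H_n(\Res^{\Simp}_{\Diff}X_\bullet)\cong\pi_n(X_\bullet)$. The classical Moore normalization theorem, as in~\cite[Thm 8.3.8]{Weibel:HomologicalAlgebra} or~\cite[Thm III 2.1]{GoerssJardine:SimplicialHomotopy}, gives the parallel identification $H_n(N_*(X_\bullet))\cong\pi_n(X_\bullet)$ implemented by sending the class of a cycle $x \in N_n(X_\bullet)$ to its equivalence class in $Z_n/\!\!\sim$. Comparing the two identifications along $\iota_{X_\bullet}$ shows that $H_n(\iota_{X_\bullet})$ is an isomorphism for every $n$.

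Finally, Theorem~\ref{thm:TorExt} shows that $\Res^{\Simp}_{\Diff}$ sends homotopy equivalences to quasi-isomorphisms, and by two-out-of-three applied to the naturality square of $\iota$, the same holds for $\CoInd_{\Simp}^{\Diff}$. Both functors therefore factor through the localization of $\rmod{\Simp}$ at homotopy equivalences, and in the localization of $\rmod{\Diff}$ at quasi-isomorphisms the components of $\iota$ become invertible; thus $\iota$ descends to the required natural isomorphism. The main obstacle is the second step: one must check that the classical isomorphism from the cited references is implemented by $H_n(\iota_{X_\bullet})$ rather than by some homotopically non-canonical splitting. This is standard once one writes out the proof of Moore normalization, which essentially exhibits the degenerate subcomplex as a contractible complement of $N_*(X_\bullet)$ inside $\Res^{\Simp}_{\Diff}X_\bullet$.
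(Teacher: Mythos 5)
Your proposal is correct and takes essentially the same route as the paper: the paper's proof consists of the single observation that the proposition reduces to the natural inclusion $N_*(X_\bullet)=\CoInd_{\Simp}^{\Diff}X_\bullet \hookrightarrow \Res^{\Simp}_{\Diff}X_\bullet$ being a quasi-isomorphism, which is exactly the classical normalization theorem cited in the proposition's header. You simply spell out the details the paper leaves implicit (naturality, compatibility of the differentials, and the descent to the localizations).
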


 \begin{proof}
   The statement is equivalent to the fact that the natural inclusion
   $N_*(X_\bullet) \to \Res_{\Diff\to\Simp} X_\bullet$ is a quasi-isomorphism for every simplicial
   module $X_\bullet$.
 \end{proof}

\subsection{The classical Dold-Kan equivalence via coinduction and restriction}

 \begin{theorem}\label{thm:adjoint-pair}
   The categories $\rmod{\Simp}$ and $\rmod{\Diff}$ are equivalent via adjoint the functors
   $\Res_{\Simp\to\Diff}$ and $\CoInd_{\Simp\to\Diff}$
   \[ \xymatrix{
       \rmod{\Simp} \ar@/^{2ex}/[rrr]^{\CoInd_{\Simp \to\Diff}} &&&
       \ar@/^{2ex}/[lll]^{\Res_{\Simp\to\Diff}} \rmod{\Diff} 
     }\]
   after we localize these categories with respect to homotopy equivalences and quasi-isomorphisms,
   respectively.
 \end{theorem}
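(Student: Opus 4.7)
The plan is to recognize $(\Res^{\Diff}_{\Simp}, \CoInd_{\Simp}^{\Diff})$ as an adjoint pair with $\Res^{\Diff}_{\Simp}$ as the left adjoint, descend the adjunction to the localized categories, and then invoke the categorical principle that a fully faithful left adjoint whose right adjoint is conservative is automatically an equivalence. This sidesteps any direct identification of the counit, which would be delicate because the Moore complex is not a $\Simp$-submodule of $X_\bullet$ in general.

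To descend the adjunction, I would verify that both functors preserve the relevant weak equivalences. If $f$ is a quasi-isomorphism in $\rmod{\Diff}$, then $\Res^{\Simp}_{\Diff}\circ \Res^{\Diff}_{\Simp}(f) = f$ is again a quasi-isomorphism, so Theorem~\ref{thm:TorExt} makes $\Res^{\Diff}_{\Simp}(f)$ a homotopy equivalence. Conversely, if $f$ is a homotopy equivalence in $\rmod{\Simp}$, then $\Res^{\Simp}_{\Diff}(f)$ is a quasi-isomorphism by Theorem~\ref{thm:TorExt}, and Proposition~\ref{prop:equivalence} gives the same for $\CoInd_{\Simp}^{\Diff}(f)$.

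I would then observe that the unit $\eta_Y\colon Y\to \CoInd_{\Simp}^{\Diff}(\Res^{\Diff}_{\Simp}(Y))$ is an isomorphism on the nose: by the lemma describing $\CoInd_{\Simp}^{\Diff}$ as the subspace on which $\ker(\eta)$ acts by zero, and since this ideal acts trivially on $\Res^{\Diff}_{\Simp}(Y)$ by construction, the coinduction returns $Y$ itself together with its original $\Diff$-action. This makes $\Res^{\Diff}_{\Simp}$ fully faithful both before and after localization. On the other side, $\CoInd_{\Simp}^{\Diff}$ is conservative on the localized categories: if $\CoInd_{\Simp}^{\Diff}(f)$ is a quasi-isomorphism, then Proposition~\ref{prop:equivalence} gives the same for $\Res^{\Simp}_{\Diff}(f)$, which forces $f$ to be a homotopy equivalence by Theorem~\ref{thm:TorExt}.

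The final step is purely categorical. With $L = \Res^{\Diff}_{\Simp}$ fully faithful and $R = \CoInd_{\Simp}^{\Diff}$ conservative on the localized categories, the triangle identity $R(\epsilon_X)\circ \eta_{R(X)} = \mathrm{id}_{R(X)}$ combined with the invertibility of the unit forces $R(\epsilon_X)$ to be an isomorphism in the localized target; conservativity of $R$ then promotes $\epsilon_X$ itself to an isomorphism in localized $\rmod{\Simp}$, so the adjunction is an equivalence with quasi-inverse pair as stated. The main subtlety I anticipate is that the counit $\epsilon_X\colon \Res^{\Diff}_{\Simp}\CoInd_{\Simp}^{\Diff}(X_\bullet)\to X_\bullet$ is genuinely not an isomorphism before localization, and no naive $\Simp$-linear comparison map from the Moore complex back to $X_\bullet$ exists; the conservativity-plus-triangle trick is precisely what lets Proposition~\ref{prop:equivalence} stand in for an explicit formula.
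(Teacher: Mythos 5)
Your proposal is correct, and its skeleton matches the paper's: both arguments first observe that $\CoInd_{\Simp}^{\Diff}\Res^{\Diff}_{\Simp}$ is the identity on the nose (your unit computation is exactly the paper's remark that $\ker(\eta)$ already acts by zero on $\Res^{\Diff}_{\Simp}Y_*$), and both ultimately reduce the comparison of $\Res^{\Diff}_{\Simp}\CoInd_{\Simp}^{\Diff}$ with the identity to the quasi-isomorphism $N_*(X_\bullet)\to\Res^{\Simp}_{\Diff}X_\bullet$ of Proposition~\ref{prop:equivalence} together with Theorem~\ref{thm:TorExt}. Where you genuinely diverge is in how that second comparison is made. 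The paper asserts directly that ``the natural embedding $\Res^{\Diff}_{\Simp}\CoInd_{\Simp}^{\Diff}X_\bullet\to X_\bullet$'' is a homotopy equivalence because it induces an isomorphism on homology; but, as you anticipate, the set-theoretic inclusion $N_*(X_\bullet)\hookrightarrow X_\bullet$ need not commute with the degeneracy action (degeneracies act by zero on $\Res^{\Diff}_{\Simp}\CoInd_{\Simp}^{\Diff}X_\bullet$ but not on $X_\bullet$), so the comparison morphism must be taken to be the genuine counit of the adjunction $\left(\Res^{\Diff}_{\Simp},\CoInd_{\Simp}^{\Diff}\right)$. Your route --- the triangle identity forces $\CoInd_{\Simp}^{\Diff}(\epsilon_X)$ to be invertible because the unit is, and conservativity of $\CoInd_{\Simp}^{\Diff}$ (supplied by Proposition~\ref{prop:equivalence} and Theorem~\ref{thm:TorExt}) then promotes $\epsilon_X$ to a homotopy equivalence --- buys exactly the justification the paper's one-line assertion leaves implicit, without ever writing the counit down. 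One simplification is available to you: since the unit is an isomorphism already in $\rmod{\Diff}$ before localizing, the identity $\CoInd_{\Simp}^{\Diff}(\epsilon_X)\circ\eta_{\CoInd_{\Simp}^{\Diff}(X_\bullet)}=\mathrm{id}$ makes $\CoInd_{\Simp}^{\Diff}(\epsilon_X)$ an honest isomorphism of $\Diff$-modules, so you can apply your conservativity step to actual morphisms and avoid any appeal to saturation of quasi-isomorphisms in the localized category. Your verification that both functors preserve the relevant weak equivalences, needed to descend the adjunction, is also correct and is a point the paper passes over silently.
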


 \begin{proof}
   Let us take a differential graded module $Y_*$ and consider $\Res_{\Simp\to\Diff}Y_*$.  This is
   the same graded module $Y_*$ considered as a simplicial $\Bbbk$-module on which every $d_{i,n}$
   act by 0 for $i>0$, or equivalently every face map $\partial^n_i$ act by 0 for $i>0$.  Now, if
   we consider $\CoInd_{\Simp\to\Diff}\Res_{\Simp\to\Diff}Y_*$, we get the graded vector subspace
   of $Y_* = \Res^{\Diff}_{\Simp}Y_*$ given by the intersection of the kernels of the face maps
   $\partial^n_i$ for $i>0$. This is again $Y_*$ with the same differentials. Thus
   $\CoInd_{\Simp\to\Diff}\Res_{\Simp\to\Diff}$ is the identity functor. On the other hand, take a
   simplicial $\Bbbk$-module $X_\bullet$, and consider the composition
   $\Res_{\Simp\to\Diff}\CoInd_{\Simp\to\Diff}X_\bullet$. This is the normalized Moore complex
   $N_*(X_\bullet)$ considered as a simplicial module where every face map $\partial^n_i$ acts by
   $0$ for $i>0$. However, since the homology groups of
   $\Res_{\Simp\to\Diff}\CoInd_{\Simp\to\Diff}X_\bullet$, $N_*(X_\bullet)$, and $X_\bullet$ are all
   isomorphic, the natural embedding
   $\Res_{\Simp\to\Diff}\CoInd_{\Simp\to\Diff}X_\bullet \to X_\bullet$ is a homotopy equivalence.
 \end{proof}

\subsection{An equivalence for crossed simplicial groups}~

We consider the derived functors of the functor $\Hom_{\Delta G}(\ \cdot\ ,\Bbbk^\vee_\bullet)$ for
the homotopy equivalences of crossed simplicial modules. However, this functor can be factored into
a composition of two functors as
\[ \Hom_{\Simp G}(\ \cdot\ ,\Bbbk^\vee_\bullet) \cong \Hom_{\Simp}(\Res_{\Simp\to\Simp G}\ \cdot\
  ,\Bbbk^\vee_\bullet)^G \] where we apply the restriction functor, and after calculating the
derived functors, then we take the $G$-invariants. We would like to reverse the order of the
application of these functors. 

The main difficulty in reversing the order of applications of these functors comes from the fact
that the subspace of $G$-invariants of a $\Simp G$-module $X_\bullet$ is not necessarily a
$\Simp$-submodule. Similarly, when in defining $G$-coinvariants, we need to divide $X_\bullet$ by a
vector subspace of elements spanned by elements of the form $x\tl (g-1)$ that do not necessarily
form a $\Simp$-submodule.

\begin{definition}\label{defn:upsilon}
  Let $\epsilon_G$ be the left $\Simp G$- and right $\Simp$-submodule of $\Simp G$ generated by
  elements of the form $g-1$ with $g\in G_n$ for every $n\geq 0$.  Now, define $\Upsilon$ as the
  $\Simp G$-$\Simp$-bimodule quotient $\Simp G/\epsilon_G$.
\end{definition}

With this definition at hand, one can now define another adjoint pair
\[ \xymatrix{ \rmod{\Simp G} \ar@/^2ex/[rrr]^{\Ind_{\Simp G\to\Upsilon}} &&&
    \ar@/^2ex/[lll]^{\CoInd_{\Simp G\to\Upsilon}} \rmod{\Simp} }\] where
$\Ind_{\Simp G\to \Upsilon} = (\ \cdot\ )\otimes_{\Simp G}\Upsilon$ and
$\CoInd_{\Simp G\to\Upsilon} = \Hom_{\Simp}(\Upsilon, \ \cdot\ )$.
Now, we observe that
\begin{equation}
  \label{eq:observation}
   X_\bullet\otimes_{\Simp G}\Bbbk_\bullet
  \cong X_\bullet\otimes_{\Simp G}\Upsilon\otimes_{\Simp}\Bbbk_\bullet
  = (X_\bullet)_G\otimes_{\Simp}\Bbbk_\bullet
\end{equation}

\begin{theorem}\label{thm:HomotopyEquivalence}
  The categories $\rmod{\Simp G}$ and $\rmod{\Simp}$ are equivalent via the functors
  $\Ind_{\Simp G\to\Upsilon}$ and $\Ind_{\Simp\to\Simp G}$
  \[ \xymatrix{ \rmod{\Simp G} \ar@/^{2ex}/[rrr]^{\Ind_{\Simp G\to\Upsilon}} &&&
      \ar@/^{2ex}/[lll]^{\Ind_{\Simp \to\Simp G}} \rmod{\Simp} }\] after we localize underlying
  categories with respect to the corresponding homotopy equivalences.
\end{theorem}

\begin{proof}
  We first observe that, for a $\Simp G$-module $X_\bullet$ and a $\Simp$-module $Y_\bullet$ we
  have
  \[ \Ind_{\Simp G\to\Upsilon}(X_\bullet) = X_\bullet\otimes_{\Simp G}\Upsilon \cong (X_\bullet)_G \] and
  \[ \Ind_{\Simp\to\Simp G}(Y_\bullet) = Y\otimes_{\Simp}\Simp G \cong Y\otimes_{\Bbbk_\bullet}
    G \] These identities indicate that we have isomorphisms of functors of the form
  \[ \Ind_{\Simp G\to\Upsilon}\Ind_{\Simp\to\Simp G} \cong \left(\ \cdot\ \otimes_{\Bbbk_\bullet}
      G\right)_G\cong id_{\rmod{\Simp}} \] On the opposite side we have
  \[ \Ind_{\Simp\to\Simp G}\Ind_{\Simp G\to \Upsilon} \cong (\ \cdot\ )_G\otimes_{\Bbbk_\bullet}
    G \] which implies there is a natural transformation of the form
  $\eta_\bullet\colon id_{\rmod{\Simp G}} \to (\ \cdot\ )_G\otimes_{\Bbbk_\bullet} G$ which, in
  turn, induces isomorphisms in induced maps on the torsion groups as
  $\Tor^{\Simp G}_*(\eta_\bullet,\Bbbk_\bullet)$ since
  \[ \left((\ \cdot\ )_G\otimes_{\Bbbk_\bullet} G \right)_G \cong (\ \cdot\ )_G \]
  The result follows.
\end{proof}

\begin{corollary}\label{cor:main-result}
We have the following equivalences
\[ \xymatrix{
    \rmod{\Simp G} \ar@/^2ex/[rr]^{\Ind_{\Simp G\to\Upsilon}} 
    && \ar@/^2ex/[ll]^{\Ind_{\Simp\to\Simp G}} \rmod{\Simp} \ar@/^2ex/[rr]^{\CoInd_{\Simp\to\Diff}} 
    && \ar@/^2ex/[ll]^{\Res_{\Simp\to\Diff}} \rmod{\Diff}
}\]
after localizing each category with respect to the corresponding homotopy equivalences or quasi-isomorphisms.
\end{corollary}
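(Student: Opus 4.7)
The plan is to obtain the corollary as a straightforward composition of the two equivalences already established in Theorem~\ref{thm:equivalence} and Theorem~\ref{thm:adjoint-pair}. Since equivalences of categories compose, and since the localization of a composite at a class of morphisms factors through the localizations of each factor whenever the functors preserve the distinguished classes, the main content is to check that the relevant functors map homotopy equivalences to homotopy equivalences, and homotopy equivalences to quasi-isomorphisms at the appropriate step.

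First I would invoke Theorem~\ref{thm:equivalence} to get the adjoint pair $(\Res_{\Simp G}^{\Simp},\CoInd_{\Simp G}^{\Simp})$ as an equivalence between $\rmod{\Simp G}$ and $\rmod{\Simp}$ after inverting the homotopy equivalences on each side. The proof of that theorem shows that $\Res^{\Simp}_{\Simp G}\CoInd^{\Simp}_{\Simp G}$ is a self derived equivalence, which in particular means both functors respect the classes of morphisms to be inverted; so they descend to mutually inverse equivalences between the localized categories.

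Next I would splice this with Theorem~\ref{thm:adjoint-pair}, which already provides an equivalence $(\CoInd_{\Simp}^{\Diff},\Res^{\Diff}_{\Simp})$ between $\rmod{\Simp}$ (localized at homotopy equivalences) and $\rmod{\Diff}$ (localized at quasi-isomorphisms). Composing with the previous equivalence produces the zig-zag of functors appearing in the statement, and composition of equivalences is an equivalence; this is essentially the whole proof.

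The only step that requires a moment of care, and thus is the mildest obstacle, is verifying compatibility of the classes of morphisms at the middle category $\rmod{\Simp}$: the homotopy equivalences inverted when going to $\rmod{\Simp G}$ must coincide with the homotopy equivalences inverted when going to $\rmod{\Diff}$. This is immediate from the definition, since in both directions the homotopy equivalences on $\rmod{\Simp}$ are the classical simplicial ones detected by $\pi_n$, and by Theorem~\ref{thm:TorExt} these are exactly the maps sent to quasi-isomorphisms by $\Res^{\Simp}_{\Diff}$. Once this identification is recorded, the corollary follows by pasting the two equivalences end to end.
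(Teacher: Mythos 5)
Your proposal is correct and matches the paper's (implicit) argument exactly: the corollary is stated without proof precisely because it is the composition of Theorem~\ref{thm:equivalence} and Theorem~\ref{thm:adjoint-pair}, with the homotopy equivalences on the middle category $\rmod{\Simp}$ being the same class in both equivalences. Your extra remark checking that compatibility via Theorem~\ref{thm:TorExt} is a sensible, if routine, addition.
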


\section{Model Categorical Reformulation}\label{sect:Quillen}

\subsection{A cofibrantly generated Quillen model category structure on $\rmod{\Diff}$}~

We start by describing the standard model structure on the category $\rmod{\Diff}$ of dg-$\Bbbk$-vector spaces. To define a model structure, we need to determine fibrations, cofibrations, and weak equivalences that satisfy the axioms of the model structure. 

\begin{enumerate}[(i)]

\item A \emph{cofibration} in the category of dg-$\Bbbk$-vector spaces is a monomorphism $f_*\colon X_*\to Y_*$ of dg-$\Bbbk$-vector spaces.

\item A \emph{weak equivalence} of dg-$\Bbbk$-vector spaces is a morphism $f_* \colon X_* \to Y_*$ of dg-$\Bbbk$-vector spaces that induces an isomorphism $H_n(f_*)\colon H_n(X_*)\to H_n(Y_*)$ on homology groups.

\item Now, fibrations of dg-$\Bbbk$-vector spaces are completely determined by the class of weak equivalences and cofibrations. A morphism $f_*\colon X_*\to Y_*$ of dg-$\Bbbk$-vector spaces is called a \emph{fibration} if $f_*$ has the right lifting property with respect to all trivial cofibrations.

\end{enumerate}

\subsection{Transferring the model category structure to $\rmod{\Simp}$}~

We consider the algebra epimorphism $\Simp \to \Diff$, and the three functors
$\Ind_{\Simp\to\Diff}$, $\CoInd_{\Simp\to\Diff}$, and $\Res_{\Simp\to\Diff}$ associated with this
epimorphism. The coinduction functor $\CoInd_{\Simp\to\Diff}$ is given by a Hom as
$\Hom_{\Simp}(\ \cdot\ ,\Diff)$.  The restriction functor $\Res_{\Simp\to\Diff}$, on the other
hand, is given by a tensor as $(\ \cdot\ )\otimes_{\Diff}\Simp$. As such we have an adjoint pair of
functors $\left(\Res_{\Simp\to\Diff},\CoInd_{\Simp\to\Diff}\right)$. We would like to lift the
model category structure on $\rmod{\Diff}$ to a model category structure on $\rmod{\Simp}$.  We can
then lift the model structure along the right adjoint $\CoInd_{\Simp\to\Diff}$ using~\cite[Theorems
11.3.1 and 11.3.2]{hirschhorn2009model} or~\cite[Thm.7.44]{heuts_simplicial_2022}.

Note that since we deal with $\Bbbk$-vector spaces, all monomorphisms are split. Thus the lifted
cofibrations are still the class of monomorphisms. On the other hand, the lifted weak equivalences
are those morphisms $f_\bullet\colon X_\bullet\to Y_\bullet$ which are sent to quasi-isomorphisms
under the coinduction functor $\CoInd_{\Simp\to\Diff}$ which is quasi-isomorphic to the restriction
functor $\Res_{\Diff\to\Simp}$. Thus the lifted weak equivalences are exactly the class of homotopy
equivalence of simplicial vector spaces.

The class of fibrations, cofibrations, and weak equivalences we defined above yield a cofibrantly
generated Quillen model category structure on $\rmod{\Simp}$ the category of simplicial
$\Bbbk$-vector spaces as defined in~\cite{jardine_2003,SchwedeShipley03}. Now, the result below
follows directly from Theorem~\ref{thm:adjoint-pair}.

\begin{theorem}\label{thm:DoldKan}
  The Dold-Kan equivalence is a Quillen model categorical equivalence.
\end{theorem}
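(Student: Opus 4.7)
The plan is to derive the Quillen equivalence from Theorem~\ref{thm:adjoint-pair} by verifying the two standard conditions on the adjunction
\[ \Res_{\Simp}^{\Diff}\colon \rmod{\Diff}\rightleftarrows \rmod{\Simp}\colon \CoInd_{\Simp}^{\Diff}. \]
The first step is to observe that this is already a Quillen adjunction by construction: the model structure on $\rmod{\Simp}$ was transferred along the right adjoint $\CoInd_{\Simp}^{\Diff}$, so by definition that functor preserves both fibrations and weak equivalences, and consequently preserves trivial fibrations as well.

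The second step is to exploit the fact that in both model structures every object is cofibrant, because the cofibrations are the monomorphisms and hence $0\hookrightarrow X$ is always a cofibration. This collapses the usual Quillen equivalence criterion to two checks: (a) the unit $\eta_X\colon X\to \CoInd_{\Simp}^{\Diff}\Res_{\Simp}^{\Diff}X$ is a weak equivalence for every $X\in\rmod{\Diff}$, and (b) the counit $\varepsilon_Y\colon \Res_{\Simp}^{\Diff}\CoInd_{\Simp}^{\Diff}Y\to Y$ is a weak equivalence for every fibrant $Y\in\rmod{\Simp}$.

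The third step is to observe that Theorem~\ref{thm:adjoint-pair} already supplies both verifications. Its proof exhibits $\CoInd_{\Simp}^{\Diff}\Res_{\Simp}^{\Diff}$ as the identity functor on $\rmod{\Diff}$, so the unit is literally an isomorphism and in particular a quasi-isomorphism. The same proof identifies $\Res_{\Simp}^{\Diff}\CoInd_{\Simp}^{\Diff}Y$ with the normalized Moore complex $N_*(Y)$ viewed back as a simplicial module, and establishes that the canonical inclusion $\Res_{\Simp}^{\Diff}\CoInd_{\Simp}^{\Diff}Y\hookrightarrow Y$ is a homotopy equivalence for \emph{every} simplicial $\Bbbk$-vector space $Y$, not merely for fibrant ones.

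I do not foresee a serious obstacle; the one point requiring care is to keep straight that the transferred weak equivalences on $\rmod{\Simp}$ coincide with the homotopy equivalences of simplicial vector spaces, so that the counit inclusion above really is a weak equivalence in the transferred model structure. This identification was already carried out in the subsection preceding the theorem by combining Proposition~\ref{prop:equivalence} with Theorem~\ref{thm:TorExt}, so the proof amounts to assembling ingredients already at hand.
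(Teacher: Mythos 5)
Your proof is correct and follows essentially the same route as the paper: both rest on the two observations that $\CoInd_{\Simp}^{\Diff}\Res_{\Simp}^{\Diff}$ is the identity on $\rmod{\Diff}$ and that the counit $\Res_{\Simp}^{\Diff}\CoInd_{\Simp}^{\Diff}Y\to Y$ is a weak equivalence for every simplicial vector space. The paper states these two facts and concludes immediately, whereas you additionally spell out the standard bookkeeping (Quillen adjunction by construction of the transferred structure, cofibrancy of all objects, reduction to the unit/counit criterion), which is a welcome but not substantively different elaboration.
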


\subsection{Transferring the model category structure to $\rmod{\Simp G}$}\label{sect:transfer1}~

\begin{theorem}\label{thm:QuillenEquivalence}
 The model categories $\rmod{\Simp G}$ and $\rmod{\Simp}$ are equivalent.   
\end{theorem}
\begin{proof}
Observe that the epimorphism $\Simp G\to \Upsilon$ gives us a pair of adjoint functors
\[ \xymatrix{ \rmod{\Simp G} \ar@/^{2ex}/[rrr]^{\Ind_{\Simp G\to\Upsilon}} &&&
    \ar@/^{2ex}/[lll]^{\CoInd_{\Simp G\to\Upsilon}} \rmod{\Simp} }\] Note that the left adjoint
functor $\Ind_{\Simp G\to\Upsilon} = (\ \cdot\ )\otimes_{\Simp G}\Upsilon$ is the right exact
functor $(\ \cdot\ )_G$ that calculates the $G$-coinvariants of a $\Simp G$-module as a
$\Simp$-module.  We already defined homotopy equivalences in $\rmod{\Simp G}$ using
$(\ \cdot\ )_G$.  Now, we lift the remaining model category structure on $\rmod{\Simp}$ to a model
category structure on $\rmod{\Simp G}$ along the left adjoint $\Ind_{\Simp G\to\Upsilon}$
using~\cite{hess2015, hess2017necessary}. Since we already had a homotopy equivalence given by
Theorem~\ref{thm:HomotopyEquivalence}, we get a model categorical equivalence.
\end{proof}

\begin{corollary}
  The equivalences given in Corollary~\ref{cor:main-result} are equivalences of Quillen model
  categories.
\end{corollary}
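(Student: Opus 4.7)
The plan is to observe that Corollary~\ref{cor:main-result} displays the composite of two adjoint pairs, namely $(\Res_{\Simp G}^\Simp, \CoInd_{\Simp G}^\Simp)$ between $\rmod{\Simp G}$ and $\rmod{\Simp}$, and $(\Res_{\Simp}^\Diff, \CoInd_{\Simp}^\Diff)$ between $\rmod{\Simp}$ and $\rmod{\Diff}$, and that each of these has already been shown to be a Quillen equivalence in the preceding subsections. The outer one is the Dold-Kan Quillen equivalence established just above, and the inner one is Theorem~\ref{thm:quillen-equivalence}. What remains is just to observe that a composition of Quillen equivalences is a Quillen equivalence and to check that the two transferred model structures are compatible along the shared middle category $\rmod{\Simp}$.

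First I would record the two Quillen adjunctions with the restriction functors as left adjoints and the coinduction functors as right adjoints. The compatibility check is immediate, because both model structures on the flanks were lifted from the same base model structure on $\rmod{\Diff}$ along the coinduction right adjoints using the same transfer theorem, and the model structure on $\rmod{\Simp}$ appearing in both adjunctions is literally the same (described explicitly in the previous subsection as cofibrations = monomorphisms and weak equivalences = homotopy equivalences of simplicial vector spaces).

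Next I would assemble the derived unit and counit of the composite adjunction from the ones already computed. On the $\rmod{\Diff}$ side, both $\CoInd_{\Simp}^\Diff\Res_{\Simp}^\Diff$ and $\CoInd_{\Simp G}^\Simp\Res_{\Simp G}^\Simp$ are the identity, so the composite unit is the identity. On the $\rmod{\Simp G}$ side, the composite counit factors as the outer counit $\eta\colon \Res^{\Simp}_{\Simp G}\CoInd^{\Simp}_{\Simp G}\to id_{\rmod{\Simp G}}$ from Theorem~\ref{thm:quillen-equivalence}, which is a weak equivalence there, precomposed with the restriction of the inner Moore-normalization counit $\Res_{\Simp}^\Diff\CoInd_{\Simp}^\Diff\to id_{\rmod{\Simp}}$, which is a weak equivalence in $\rmod{\Simp}$ by Proposition~\ref{prop:equivalence} and is preserved by $\Res_{\Simp G}^\Simp$ (since the latter simply regards a simplicial vector space with its extra $G_\bullet$-action, and the homotopy equivalences on both sides are detected after applying $\CoInd_{\Simp G}^\Simp$). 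Both factors being weak equivalences, so is the composite.

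The only subtle point, and the only place where I would need to be careful, is the last assertion that $\Res_{\Simp G}^\Simp$ sends weak equivalences of $\rmod{\Simp}$ to weak equivalences of $\rmod{\Simp G}$; this follows because by construction of the transferred model structure a map in $\rmod{\Simp G}$ is a weak equivalence iff $\CoInd_{\Simp G}^\Simp$ sends it to a homotopy equivalence in $\rmod{\Simp}$, and $\CoInd_{\Simp G}^\Simp\Res_{\Simp G}^\Simp$ is the identity. With this observation in hand the two Quillen equivalences compose to a Quillen equivalence and the corollary follows.
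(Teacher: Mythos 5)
The paper gives no separate proof of this corollary, treating it as an immediate consequence of the two Quillen equivalences just established (the Dold--Kan equivalence between $\rmod{\Simp}$ and $\rmod{\Diff}$, and Theorem~\ref{thm:quillen-equivalence}); your proposal spells out exactly that composition argument, with the correct identification of the shared transferred model structure on $\rmod{\Simp}$ and the correct factorization of the composite derived unit and counit. Your argument is correct and takes essentially the same route as the paper, just with more of the routine details made explicit.
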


\bibliographystyle{siam}
\bibliography{bibliography}

\begin{thebibliography}{10}

\bibitem{Aboughazi:CrossedSimplicialGroups}
{\sc R.~Aboughazi}, {\em Groupes simpliciaux croisés symetriques et hyperoctahédral}.
\newblock Preprint IRMA Strasbourg, 1986.

\bibitem{Ault:SymmetricHomology}
{\sc S.~V. Ault}, {\em Symmetric homology of algebras}, Algebr. Geom. Topol., 10 (2010), pp.~2343--2408.

\bibitem{hess2015}
{\sc M.~Bayeh, K.~Hess, V.~Karpova, M.~K{\c{e}}dziorek, E.~Riehl, and B.~Shipley}, {\em Left-induced model structures and diagram categories}, in Women in topology: collaborations in homotopy theory. WIT: Women in topology workshop, Banff International Research Station, Banff, Alberta, Canada, August 18--23, 2013, Providence, RI: American Mathematical Society (AMS), 2015, pp.~49--81.

\bibitem{chorny2023variant}
{\sc B.~Chorny and D.~White}, {\em A variant of a {Dwyer-Kan} theorem for model categories}.
\newblock {\tt arXiv:1805.05378}, 2023.

\bibitem{connes1983cohomologie}
{\sc A.~Connes}, {\em Cohomologie cyclique et foncteurs {Extn}}, CR Acad. Sci. Paris S{\'e}r. I Math, 296 (1983), pp.~953--958.

\bibitem{DoldPuppe:Correspondence}
{\sc A.~Dold and D.~Puppe}, {\em Homologie nicht-additiver {F}unktoren. {A}nwendungen}, Ann. Inst. Fourier Grenoble, 11 (1961), pp.~201--312.

\bibitem{dwyer1985normalizing}
{\sc W.~G. Dwyer and D.~M. Kan}, {\em Normalizing the cyclic modules of {Connes}}, Commentarii Mathematici Helvetici, 60 (1985), pp.~582--600.

\bibitem{dwyer1987three}
\leavevmode\vrule height 2pt depth -1.6pt width 23pt, {\em Three homotopy theories for cyclic modules}, Journal of Pure and Applied Algebra, 44 (1987), pp.~165--175.

\bibitem{dyckerhoff-categorified}
{\sc T.~Dyckerhoff}, {\em A categorified {D}old-{K}an correspondence}, Selecta Math. (N.S.), 27 (2021), pp.~Paper No. 14, 35.

\bibitem{eilenberg1950semi}
{\sc S.~Eilenberg and J.~A. Zilber}, {\em Semi-simplicial complexes and singular homology}, Ann. of Math, 51 (1950), pp.~499--513.

\bibitem{fiedorowicz1991crossed}
{\sc Z.~Fiedorowicz and J.-L. Loday}, {\em Crossed simplicial groups and their associated homology}, Transactions of the American Mathematical Society, 326 (1991), pp.~57--87.

\bibitem{GetzlerJones:CyclicHomologyOfCrossedProductAlgebras}
{\sc E.~Getzler and J.~D.~S. Jones}, {\em The cyclic homology of crossed product algebras i}, J. Reine Angew. Math., 445 (1993), pp.~161--174.

\bibitem{GoerssJardine:SimplicialHomotopy}
{\sc P.~G. Goerss and J.~F. Jardine}, {\em Simplicial homotopy theory}, vol.~174 of Progress in Mathematics, Birkhäuser Verlag, 1999.

\bibitem{hess2017necessary}
{\sc K.~Hess, M.~Kedziorek, E.~Riehl, and B.~Shipley}, {\em A necessary and sufficient condition for induced model structures}, Journal of Topology, 10 (2017), pp.~324--369.

\bibitem{heuts_simplicial_2022}
{\sc G.~Heuts and I.~Moerdijk}, {\em Simplicial and dendroidal homotopy theory}, no.~3. Folge, Volume 75 in Ergebnisse der Mathematik und ihrer Grenzgebiete. 3. Folge / a series of modern surveys in mathematics, Springer Nature, Cham, 2022.

\bibitem{Hinich16}
{\sc V.~Hinich}, {\em Dwyer-{Kan} localization revisited}, Homology Homotopy Appl., 18 (2016), pp.~27--48.

\bibitem{hirschhorn2009model}
{\sc P.~S. Hirschhorn}, {\em Model categories and their localizations}, no.~99, American Mathematical Soc., 2009.

\bibitem{jardine_2003}
{\sc J.~F. Jardine}, {\em Presheaves of chain complexes}, \(K\)-Theory, 30 (2003), pp.~365--420.

\bibitem{kan1955abstract}
{\sc D.~M. Kan}, {\em Abstract homotopy. {I}}, Proc. Natl. Acad. Sci. USA, 41 (1955), pp.~1092--1096.

\bibitem{kan1956abstract}
\leavevmode\vrule height 2pt depth -1.6pt width 23pt, {\em Abstract homotopy. {II}}, Proc. Natl. Acad. Sci. USA, 42 (1956), pp.~255--258.

\bibitem{Kayg11}
{\sc A.~Kaygun}, {\em Uniqueness of pairings in {H}opf-cyclic cohomology}, J. K-Theory, 6 (2010), pp.~1--21.

\bibitem{krasauskas1987skew}
{\sc R.~Krasauskas}, {\em Skew-simplicial groups}, Lithuanian Mathematical Journal, 27 (1987), pp.~47--54.

\bibitem{Loday:CyclicHomology}
{\sc J.-L. Loday}, {\em Cyclic homology}, vol.~301 of Die Grundlehren der Mathematischen Wissenschaften, Springer--Verlag, second~ed., 1998.

\bibitem{Lurie:HigherAlgebra}
{\sc J.~Lurie}, {\em Higher algebra}.
\newblock Available at {\tt https://www.math.ias.edu/\~{}lurie/papers/HA.pdf}, 2017.

\bibitem{May:SimplicialObjects}
{\sc J.~P. May}, {\em Simplicial objects in algebraic topology}, Chicago: The University of Chicago Press, 1992.

\bibitem{peroux22}
{\sc M.~Péroux}, {\em Coalgebras in the {Dwyer}-{Kan} localization of a model category}, Proc. Am. Math. Soc., 150 (2022), pp.~4173--4190.

\bibitem{peroux2023monoidal}
\leavevmode\vrule height 2pt depth -1.6pt width 23pt, {\em A monoidal {Dold-Kan} correspondence for comodules}.
\newblock {\tt arXiv:2108.04835 [math.AT]}, 2023.

\bibitem{quillen1984cyclic}
{\sc D.~Quillen and J.-L. Loday}, {\em Cyclic homology and the {Lie} algebra homology of matrices.}, Commentarii mathematici Helvetici, 59 (1984), pp.~565--591.

\bibitem{quillen1967homotopical}
{\sc D.~G. Quillen}, {\em Homotopical algebra}, vol.~43 of Lect. Notes Math., Springer, Cham, 1967.

\bibitem{richter-symmpropDold}
{\sc B.~Richter}, {\em Symmetry properties of the {Dold}-{Kan} correspondence.}, Math. Proc. Camb. Philos. Soc., 134 (2003), pp.~95--102.

\bibitem{SchwedeShipley00}
{\sc S.~Schwede and B.~E. Shipley}, {\em Algebras and modules in monoidal model categories}, Proc. Lond. Math. Soc. (3), 80 (2000), pp.~491--511.

\bibitem{SchwedeShipley03}
\leavevmode\vrule height 2pt depth -1.6pt width 23pt, {\em Equivalences of monoidal model categories}, Algebr. Geom. Topol., 3 (2003), pp.~287--334.

\bibitem{shoikhet2011bialgebra}
{\sc B.~Shoikhet}, {\em A bialgebra axiom and the {Dold-Kan} correspondence}.
\newblock {\tt arXiv:1109.5441 [math.AT]}, 2011.

\bibitem{sore-coalgebraDold}
{\sc W.~H.~B. Sore}, {\em The {Dold}-{Kan} correspondence and coalgebra structures}, J. Homotopy Relat. Struct., 11 (2016), pp.~67--96.

\bibitem{SoreHermann17}
\leavevmode\vrule height 2pt depth -1.6pt width 23pt, {\em Erratum to: ``{The} {Dold}-{Kan} correspondence and coalgebra structures''}, J. Homotopy Relat. Struct., 12 (2017), pp.~511--512.

\bibitem{truong2023operadic}
{\sc H.~Truong}, {\em Operadic {Dold-Kan} correspondence and mapping spaces between enriched operads}.
\newblock {\tt arXiv:2312.07906 [math.AT]}, 2023.

\bibitem{walde-homotopycoherent}
{\sc T.~Walde}, {\em Homotopy coherent theorems of {Dold}-{Kan} type}, Adv. Math., 398 (2022), p.~53.
\newblock Id/No 108175.

\bibitem{Weibel:HomologicalAlgebra}
{\sc C.~A. Weibel}, {\em An introduction to homological algebra}, vol.~38 of Cambridge Studies in Advanced Mathematics, Cambridge University Press, 1994.

\end{thebibliography}

\end{document}